\begin{document}


\title[Multivalued Weak Contractions in Cone Spaces]{Fixed Point Theorems for Multivalued Weak Contractions in Cone Metric Spaces}


\author{Elvin Rada}
\address{Department of Mathematics, University of Elbasan, "Aleksandër Xhuvani", Albania}
\email{author@uniel.edu.al, elvinrada@yahoo.com}
\urladdr{\href{https://orcid.org/0000-0003-3751-2889}{ORCID: 0000-0003-3751-2889}}


\keywords{fixed point theory; multivalued mappings; weak contractions; cone metric spaces}

\subjclass{47H10, 47H04}

\begin{abstract}
This article presents a deep investigation of fixed points for multivalued weak contractions in cone metric spaces. We extend Berinde's weak contraction principles to the multivalued setting in cone metric spaces, developing existence, uniqueness, and convergence results. Using the structure of normal cones in Banach spaces, we prove generalized fixed-point theorems for set-valued mappings satisfying weak contractive conditions. The theoretical framework includes iterative approximation schemes with explicit convergence rates and stability analysis of fixed point sets. In the end, some applications to differential inclusions and variational inequalities demonstrate the utility of our results in nonlinear analysis.
\end{abstract}

\maketitle


\theoremstyle{plain} 
\newtheorem{theorem}{Theorem}[section]
\newtheorem{corollary}[theorem]{Corollary}
\newtheorem{lemma}[theorem]{Lemma}

\theoremstyle{definition} 
\newtheorem{definition}[theorem]{Definition}

\newtheorem{proposition}[theorem]{Proposition}

\newtheorem{example}[theorem]{Example}
\newtheorem{remark}[theorem]{Remark}



\section{Introduction}\label{sec:intro}
Fixed point theory for multivalued mappings is a very important part of nonlinear analysis with deep implications across diverse mathematical disciplines. The start of this theory can be traced to Nadler's  1969 extension \cite{nadler} of Banach's contraction principle to set-valued mappings using Hausdorff distance. This innovation opened opportunities for applications in optimization theory, mathematical economics, and differential inclusions where set-valued operators naturally arise.

The structure of fixed point theory had another big development with Huang and Zhang's introduction of cone metric spaces in 2007 \cite{huangzhang}. By replacing real-valued distances with vector-valued metrics taking values in ordered Banach space cones, this framework provided a powerful tool for studying problems with inherent order structures. Cone metrics enable unified treatment of various distance concepts, including Lorentzian metrics in relativity and vector-valued norms in functional analysis.

A very important advancement in contraction theory came from Berinde \cite{berinde} through weak contraction mappings. For single-valued operators, Berinde's theorem establishes that a self-map $T$ on a complete metric space $(X,d)$ satisfying:
\begin{equation}\label{eq:berinde}
d(Tx, Ty) \leq \delta d(x,y) + L d(y, Tx) \quad \text{for } \delta \in (0,1), L \geq 0
\end{equation}
admits fixed points. This weak contraction condition is more general than standard contractions as it incorporates additive error terms, significantly expanding the class of mappable operators. Berinde's work bridged the gap between strict contractions and nonexpansive mappings, yielding convergence results for Picard iterations with error estimates.

The confluence of these three developments remains largely unexplored. This article addresses this gap by developing a comprehensive fixed point theory for multivalued weak contractions in cone metric spaces. Our work draws inspiration from several sources:
\begin{itemize}
    \item The cone metric framework established by Huang and Zhang \cite{huangzhang}
    \item Berinde's weak contraction principles \cite{berinde}
    \item Multivalued contraction theory pioneered by Nadler \cite{nadler}
    \item Our earlier results on fixed points in cone Banach spaces \cite{rada}

\end{itemize}

Our principal contributions are:
\begin{enumerate}
    \item A direct multivalued extension of Berinde's theorem in cone metric spaces (Theorem \ref{thm:main})
    \item $\lambda$-iterative schemes with explicit convergence rates (Theorem \ref{thm:lambda})
    \item Stability analysis of fixed point sets under perturbations (Theorem \ref{thm:stability})
    \item Uniqueness criteria under strengthened contractive conditions (Proposition \ref{prop:uniqueness})
    \item Applications to differential inclusions (Theorem \ref{thm:diff_inclusion}) and variational inequalities (Theorem \ref{thm:variational})
\end{enumerate}

The article is structured as follows: Section \ref{sec:prelim} establishes foundational concepts with detailed historical context. Section \ref{sec:main} develops our core theoretical results with complete proofs. Section \ref{sec:applications} demonstrates applications to nonlinear analysis. We conclude with open problems for future research.

\section{Preliminaries}\label{sec:prelim}
We establish fundamental concepts of cone metric spaces and weak contractions with proper historical context and citations. Throughout, $E$ denotes a real Banach space, $P \subset E$ a closed convex cone with $\operatorname{int}(P) \neq \emptyset$, and $X$ a nonempty set. The partial order $\preceq$ induced by $P$ is defined as $x \preceq y$ iff $y - x \in P$.

\begin{definition}[Cone Metric {\rm(\cite[Definition 2.1]{huangzhang})}]
A \emph{cone metric} is a mapping $d: X \times X \to P$ satisfying:
\begin{enumerate}[label=(\roman*)]
  \item $d(x, y) = 0 \Leftrightarrow x = y$
  \item $d(x, y) = d(y, x)$
  \item $d(x, z) \preceq d(x, y) + d(y, z)$
\end{enumerate}
The pair $(X,d)$ is a \emph{cone metric space}. When $E$ is a Banach lattice and $P$ its positive cone, this structure generalizes standard metric spaces.
\end{definition}

\begin{remark}
The cone metric framework extends beyond traditional metrics. Key examples include:
\begin{itemize}
    \item Lorentzian metrics in general relativity \cite{abbas}
    \item Vector-valued norms in functional analysis
    \item Ordered Banach spaces with lattice structure
\end{itemize}
\end{remark}

\begin{definition}[Normal Cone {\rm(\cite[Definition 2.1]{huangzhang})}]
Cone $P$ is \emph{normal} if there exists $\kappa > 0$ such that:
\[0 \preceq a \preceq b \implies \|a\| \leq \kappa \|b\|\]
The minimal such $\kappa$ is the \emph{normality coefficient}. Normality ensures compatibility between the order and norm topologies.
\end{definition}

\begin{example}
Standard examples of normal cones include:
\begin{itemize}
    \item $P = \mathbb{R}_+$ in $\mathbb{R}$ with $\kappa = 1$
    \item Nonnegative functions in $L^p$ spaces ($1 \leq p < \infty$)
    \item Positive semidefinite matrices in operator algebras
\end{itemize}
\end{example}

\begin{definition}[Hausdorff Cone Metric {\rm(\cite[Section 4]{nadler})}]
For nonempty closed subsets $A,B \subset X$, the \emph{Hausdorff cone metric} is:
\[
H_d(A,B) = \inf \left\{ r \in P : \begin{array}{c} 
\forall a \in A, \exists b \in B : d(a,b) \preceq r \\ 
\forall b \in B, \exists a \in A : d(b,a) \preceq r \end{array} \right\}
\]
This extends Nadler's Hausdorff metric to the cone setting.
\end{definition}

\begin{definition}[Berinde's Weak Contraction {\rm(\cite[Definition 2.1]{berinde})}]
A single-valued mapping $T: X \to X$ is a \emph{weak contraction} if there exist $\delta \in (0,1)$ and $L \geq 0$ such that:
\[d(Tx, Ty) \preceq \delta d(x,y) + L d(y, Tx)\]
\end{definition}

\begin{definition}[Multivalued Weak Contraction]
A multivalued map $T: X \to \mathscr{C}(X)$ (nonempty closed subsets) is a \emph{weak contraction} if:
\[H_d(Tx, Ty) \preceq \delta d(x,y) + L \inf_{z \in Ty} d(x,z)\]
This generalizes Berinde's condition to set-valued operators.
\end{definition}

\begin{lemma}[Selection Principle {\rm(\cite[Lemma 3.2]{nadler})}]\label{lem:selection}
For any $x \in X$ and $y \in Tx$, there exists $z \in Ty$ such that:
\[d(y,z) \preceq H_d(Tx, Ty) + \epsilon \quad \forall \epsilon \succ 0\]
\end{lemma}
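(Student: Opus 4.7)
The plan is to adapt Nadler's classical selection argument to the cone order $\preceq$. Fix $\epsilon \succ 0$ and set
\[\mathcal{S} = \{r \in P : \forall a \in Tx\, \exists b \in Ty \text{ with } d(a,b) \preceq r, \text{ and symmetrically}\},\]
so that by definition $H_d(Tx, Ty) = \inf \mathcal{S}$ in $(P, \preceq)$. The first step — and the real content of the lemma — is to produce a single witness $r_\epsilon \in \mathcal{S}$ with $r_\epsilon \preceq H_d(Tx, Ty) + \epsilon$.

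Granting such an $r_\epsilon$, the conclusion follows in one line. Since $y \in Tx$, the defining property of $\mathcal{S}$ applied to $r_\epsilon$ with $a = y$ supplies $z \in Ty$ with $d(y,z) \preceq r_\epsilon$, and transitivity of $\preceq$ then gives
\[d(y,z) \preceq r_\epsilon \preceq H_d(Tx, Ty) + \epsilon,\]
which is exactly the claimed inequality. The passage from $y \in Tx$ to $z \in Ty$ — the ``selection'' named in the lemma — is therefore reduced to the approximation of $\inf \mathcal{S}$.

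The principal obstacle is this approximation step. In a general poset, an infimum need not be approachable from above, so the cone structure must be used. I would exploit two features of the setting: (i) $\mathcal{S}$ is upward closed, since $r \in \mathcal{S}$ and $r' \succeq r$ together imply $r' \in \mathcal{S}$; and (ii) $\epsilon \in \operatorname{int}(P)$ is a strict positive element, so $H_d(Tx, Ty) + \epsilon$ is genuinely order-above $\inf \mathcal{S}$. These together force $H_d(Tx, Ty) + \epsilon$ not to be a lower bound of $\mathcal{S}$, and hence some $r_\epsilon \in \mathcal{S}$ must lie $\preceq$-below it, completing the main step. Should a purely order-theoretic argument prove insufficient for an arbitrary closed convex cone, the normality hypothesis offers a fallback: run the approximation in the ambient Banach norm using the real-valued infimum of $\{\|r\| : r \in \mathcal{S}\}$ and transfer the resulting norm inequality back into an order inequality using the normality coefficient $\kappa$, at the cost of absorbing a harmless constant into $\epsilon$.
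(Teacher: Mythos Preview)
The paper does not give a proof of this lemma; it is stated with a citation to Nadler and then used as a black box in Theorem~\ref{thm:main}. There is therefore no ``paper's own proof'' to compare your attempt against.

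On the substance of your proposal: the reduction is set up correctly, and the one-line passage from a witness $r_\epsilon \in \mathcal{S}$ with $r_\epsilon \preceq H_d(Tx,Ty)+\epsilon$ to the desired $z \in Ty$ is fine. The gap is in producing $r_\epsilon$. Your key inference --- ``$H_d(Tx,Ty)+\epsilon$ is not a lower bound of $\mathcal{S}$, hence some $r_\epsilon \in \mathcal{S}$ lies $\preceq$-below it'' --- is invalid in a partial order. From $H_d+\epsilon \not\preceq r$ for some $r\in\mathcal{S}$ you cannot conclude $r \preceq H_d+\epsilon$; the two vectors may simply be incomparable. Upward closure of $\mathcal{S}$ does not help here. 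A concrete obstruction: in $P=\mathbb{R}^2_+$ the upward-closed set $\mathcal{S}=\{(x,y): x\ge 1,\ y\ge 1,\ x+y\ge 3\}$ has $\inf\mathcal{S}=(1,1)$, yet no element of $\mathcal{S}$ lies below $(1.1,1.1)$.

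Your normality fallback does not close the gap either. Finding $r\in\mathcal{S}$ with $\|r\|$ near $\inf_{s\in\mathcal{S}}\|s\|$ yields no order relation between $r$ and $H_d+\epsilon$, and normality transfers order inequalities to norm inequalities, not the reverse. To make the argument work you would need to exploit the specific structure of $\mathcal{S}$ coming from the Hausdorff definition (e.g.\ that it is an intersection of translated cones, or that the infimum is actually attained or approximable along a chain), rather than treat $\mathcal{S}$ as an abstract upward-closed set.
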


\begin{theorem}[Cone Contraction Principle {\rm(\cite[Theorem 1]{huangzhang})}]
Every contraction $T: X \to X$ on a complete cone metric space has a unique fixed point.
\end{theorem}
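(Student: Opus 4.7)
The plan is to mirror the classical proof of Banach's contraction principle, replacing real-valued distances with cone-valued estimates and invoking the normality of $P$ to convert order inequalities into norm inequalities. A contraction here means a self-map $T$ satisfying $d(Tx,Ty) \preceq k\, d(x,y)$ for some $k \in (0,1)$, so the argument hinges on iterating this bound along a Picard orbit.

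First, I would fix an arbitrary $x_0 \in X$ and define $x_{n+1} = Tx_n$. A straightforward induction using the contractive inequality yields
\[
d(x_n, x_{n+1}) \preceq k^n d(x_0, x_1).
\]
Next, I would iterate the triangle inequality from the cone-metric axioms and sum a geometric tail to obtain, for $m > n$,
\[
d(x_n, x_m) \preceq \frac{k^n}{1-k}\, d(x_0, x_1).
\]
Applying normality with coefficient $\kappa$ then gives $\|d(x_n, x_m)\| \leq \kappa\frac{k^n}{1-k}\|d(x_0,x_1)\|$, so $(x_n)$ is Cauchy in norm. Completeness of $(X,d)$ (equivalent, under normality, to norm-completeness of Cauchy orbits) yields a limit $x^\ast \in X$. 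To identify $x^\ast$ as a fixed point, I would use the estimate
\[
d(Tx^\ast, x^\ast) \preceq k\, d(x^\ast, x_n) + d(x_{n+1}, x^\ast),
\]
pass to norms via normality, and let $n \to \infty$ to conclude $d(Tx^\ast, x^\ast) = 0$, i.e.\ $Tx^\ast = x^\ast$. For uniqueness, if $y^\ast$ is another fixed point then $d(x^\ast,y^\ast) \preceq k\, d(x^\ast,y^\ast)$, hence $(1-k)d(x^\ast,y^\ast) \in -P$ while also $d(x^\ast,y^\ast) \in P$; normality forces $\|d(x^\ast,y^\ast)\|=0$.

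The main obstacle is the careful handling of the order/norm correspondence. In cone metric spaces convergence and the Cauchy property are defined via the order on $P$ (using interior elements $c \succ 0$), and without normality these notions need not agree with their norm counterparts. The normality hypothesis has to be invoked at precisely the right steps—once to pass from the cone-order bound on $d(x_n,x_m)$ to a norm bound yielding a Cauchy sequence, once to transfer the limiting behavior into the identity $Tx^\ast = x^\ast$, and once in the uniqueness argument—to close the gap between the algebraic cone estimates and the topological conclusions about $(X,d)$.
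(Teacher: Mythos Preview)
The paper does not supply a proof of this statement at all; it is quoted in the Preliminaries as a known result from Huang--Zhang \cite{huangzhang}, with no argument given. Hence there is no in-paper proof to compare against. Your proposal is the standard Huang--Zhang argument (Picard iteration, geometric cone-order estimate $d(x_n,x_m)\preceq \frac{k^n}{1-k}d(x_0,x_1)$, normality to pass to norm bounds, completeness for the limit, and the contractive inequality for uniqueness), and it is correct as written.
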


\begin{theorem}[Nadler's Theorem in Cones {\rm(\cite[Theorem 4.1]{rada})}]\label{thm:nadler}
Let $T: X \to \mathscr{C}(X)$ satisfy $H_d(Tx, Ty) \preceq k d(x,y)$ with $k \in (0,1)$. Then $T$ has a fixed point.
\end{theorem}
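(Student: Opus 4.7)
The plan is to mimic Nadler's classical iteration scheme, adapted to the vector-valued setting by exploiting normality of $P$ to convert cone estimates into norm estimates. Starting from an arbitrary $x_0 \in X$ and any $x_1 \in Tx_0$, I would build a sequence $\{x_n\}$ with $x_{n+1} \in Tx_n$ such that the distances $d(x_n, x_{n+1})$ decay geometrically up to a controllable additive perturbation. The main obstacle is that Lemma \ref{lem:selection} gives only an approximate selection with an interior error term $\epsilon \succ 0$, so the choice of errors must be coordinated so that their cumulative effect is summable in norm.

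To construct the iteration, I would fix once and for all an element $e \in \operatorname{int}(P)$ and, at step $n$, invoke Lemma \ref{lem:selection} with $\epsilon = k^n e$ to obtain $x_{n+1} \in Tx_n$ with
\[
d(x_n, x_{n+1}) \preceq H_d(Tx_{n-1}, Tx_n) + k^n e \preceq k\, d(x_{n-1}, x_n) + k^n e.
\]
Writing $a_n := d(x_n, x_{n+1})$ and iterating this recurrence would yield $a_n \preceq k^n a_0 + n k^n e$, an estimate that is straightforward to verify by induction using only the cone order.

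From here I would use the triangle inequality in the cone to bound $d(x_n, x_m) \preceq \sum_{j=n}^{m-1} a_j$, and then invoke normality of $P$: there exists $\kappa > 0$ with
\[
\|d(x_n, x_m)\| \;\leq\; \kappa \sum_{j=n}^{m-1} \bigl( k^j \|a_0\| + j k^j \|e\| \bigr),
\]
whose tail tends to zero since $\sum k^j$ and $\sum j k^j$ both converge for $k \in (0,1)$. This shows $\{x_n\}$ is Cauchy in the norm topology, hence also in the cone metric, so completeness of $(X,d)$ delivers a limit $x^* \in X$.

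It remains to verify $x^* \in Tx^*$, which is the second delicate point. Applying the contraction on the pair $(x_n, x^*)$, for each $n$ the selection lemma (again with a vanishing error $k^n e$) produces $y_n \in Tx^*$ such that $d(x_{n+1}, y_n) \preceq H_d(Tx_n, Tx^*) + k^n e \preceq k\, d(x_n, x^*) + k^n e$. Normality then forces $\|d(x_{n+1}, y_n)\| \to 0$, and combining this with $x_{n+1} \to x^*$ via the triangle inequality gives $\|d(y_n, x^*)\| \to 0$. Since $Tx^* \in \mathscr{C}(X)$ is closed and $\{y_n\} \subset Tx^*$ converges to $x^*$, we conclude $x^* \in Tx^*$. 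The only nontrivial ingredients are the coordinated choice of errors $k^n e$ and the systematic use of normality to transfer convergence between the order and norm topologies; everything else is a direct transcription of Nadler's original argument into the cone setting.
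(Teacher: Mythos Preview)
The paper does not supply its own proof of Theorem~\ref{thm:nadler}; it is quoted from \cite[Theorem~4.1]{rada} as a preliminary result, so there is no in-paper argument to compare against directly. Your argument is correct and follows the classical Nadler scheme faithfully, with the cone-specific choices (the fixed $e \in \operatorname{int}(P)$, the errors $k^n e$, and the passage through normality) handled cleanly.

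If one wants a point of comparison, the closest thing in the paper is the proof of the more general Theorem~\ref{thm:main}, which specializes to the present statement when $L = 0$. That proof applies normality \emph{at each step} of the recursion, producing a contraction factor $\kappa(\delta+L)$ in norm and hence requiring the extra hypothesis $\delta\kappa < 1$. You instead iterate the inequality $a_n \preceq k\,a_{n-1} + k^n e$ entirely within the cone order, obtaining $a_n \preceq k^n a_0 + n k^n e$, and invoke normality only once at the end to pass to norms. This is both tidier and strictly sharper: it yields the conclusion under the bare hypothesis $k \in (0,1)$, with no spurious constraint involving $\kappa$. Your treatment of the endgame, producing $y_n \in Tx^*$ with $d(x_{n+1},y_n) \to 0$ and using closedness of $Tx^*$, is likewise more careful than the paper's corresponding step in Theorem~\ref{thm:main}, which bounds $d(x^*,Tx^*)$ directly without isolating a convergent sequence inside $Tx^*$.
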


\section{Main Results}\label{sec:main}
In this section we have our results for this article.

\subsection{Extension of Berinde's Theorem}

We present the multivalued extension of Berinde's theorem in cone metric spaces:

\begin{theorem}[Multivalued Berinde-Type Theorem]\label{thm:main}
Let $(X,d)$ be a complete cone metric space with normal cone $P$ (coefficient $\kappa$), and $T: X \to \mathscr{C}(X)$ a multivalued weak contraction. If $\delta \kappa < 1$ and $L \kappa < 1 - \delta \kappa$, then $T$ has a fixed point.
\end{theorem}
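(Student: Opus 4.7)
The plan is to execute the Nadler--Berinde Picard-iteration template in the cone setting, exploiting normality of $P$ to pass from vector-valued inequalities to a scalar contractive recurrence. Fix $x_0 \in X$, pick any $x_1 \in Tx_0$, choose scalars $\epsilon_n > 0$ with $\sum_n \epsilon_n < \infty$, and fix some $u \succ 0$. Using the Selection Principle (Lemma \ref{lem:selection}) with tolerance $\epsilon_n u$, recursively choose $x_{n+1} \in Tx_n$ satisfying
\[
  d(x_n, x_{n+1}) \preceq H_d(Tx_{n-1}, Tx_n) + \epsilon_n u.
\]
Applying the multivalued weak contraction and using $x_n \in Tx_{n-1}$ to collapse (or to control, via the cone triangle inequality) the Berinde additive term, one extracts the vector bound
\[
  d(x_n, x_{n+1}) \preceq \delta\, d(x_{n-1}, x_n) + \epsilon_n u.
\]
Taking norms and invoking normality produces the scalar recursion $a_n \leq \kappa\delta\, a_{n-1} + \kappa\epsilon_n\|u\|$ with $a_n := \|d(x_n, x_{n+1})\|$; the hypothesis $\delta\kappa < 1$ turns this into a contractive linear recurrence, whence $\sum_n a_n < \infty$.

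Summability together with the cone triangle inequality and normality shows $\{x_n\}$ is Cauchy in the norm topology on $X$, and completeness delivers $x^* = \lim_n x_n \in X$. To verify $x^* \in Tx^*$, decompose
\[
  d(x^*, Tx^*) \preceq d(x^*, x_{n+1}) + d(x_{n+1}, Tx^*),
\]
bound the second term by $d(x_{n+1}, Tx^*) \preceq H_d(Tx_n, Tx^*) + \eta u$ (Hausdorff-distance selection), apply the weak contraction to $H_d(Tx_n, Tx^*)$, and split the resulting Berinde additive term by the triangle inequality so that $L\, d(x^*, Tx^*)$ appears on the right-hand side. Passing to norms and using normality yields an inequality of the shape
\[
  (1 - L\kappa)\,\|d(x^*, Tx^*)\| \leq \kappa\bigl[\|d(x^*, x_{n+1})\| + (\delta+L)\,\|d(x_n, x^*)\| + \eta\|u\|\bigr].
\]
The hypothesis $L\kappa < 1 - \delta\kappa$ guarantees $1 - L\kappa > 0$; dividing, letting $n \to \infty$, and sending $\eta \to 0$ force $\|d(x^*, Tx^*)\| = 0$. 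Closedness of $Tx^*$ then gives $x^* \in Tx^*$.

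The principal obstacle lies in the verification step: the Berinde additive term makes $d(x^*, Tx^*)$ appear on both sides of the estimate, producing an absorption problem absent in the single-valued setting of \cite{berinde}. The two numerical hypotheses play complementary roles --- $\delta\kappa < 1$ drives the Cauchy argument for the iterates, whereas the sharper $L\kappa < 1 - \delta\kappa$ is calibrated so that $1 - L\kappa$ remains safely positive when the Berinde term is absorbed in the verification. A minor but genuine technical subtlety is that the Selection Principle tolerances must be realized as cone elements $\epsilon_n u$ rather than scalars, so the $\mathbb{R}$-summability of $\epsilon_n$ must be transferred to summability of $\kappa\epsilon_n\|u\|$ by linearity of the norm.
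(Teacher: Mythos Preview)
Your argument is correct and runs on the same Nadler--Berinde Picard-iteration skeleton as the paper (selection principle to build $\{x_n\}$, normality to descend to scalar estimates, completeness, then a Hausdorff-bound verification that $x^*\in Tx^*$). The genuine difference is where the Berinde $L$-term is handled. In the iteration step you collapse it: applying the weak contraction with the roles $(x,y)=(x_n,x_{n-1})$ and using $x_n\in Tx_{n-1}$ annihilates $L\,\inf_{z\in Tx_{n-1}}d(x_n,z)$, so your one-step factor is $\kappa\delta$ and only the hypothesis $\delta\kappa<1$ is needed for the Cauchy argument. The paper instead keeps the $L$-term at this stage, bounding $\inf_{z\in Tx_n}d(x_{n-1},z)\preceq d(x_{n-1},x_n)$ to obtain the coarser combined factor $(\delta+L)$, thereby already consuming both numerical hypotheses in the Cauchy step. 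Conversely, in the verification you route $d(x_n,Tx^*)\preceq d(x_n,x^*)+d(x^*,Tx^*)$ and absorb $L\kappa\,\|d(x^*,Tx^*)\|$ on the left, which is precisely where $L\kappa<1-\delta\kappa$ (hence $1-L\kappa>0$) earns its keep in your scheme; the paper's verification bounds the additive term directly by $L\,d(x_n,x^*)$ and performs no absorption. Your allocation makes the distinct roles of the two hypotheses transparent and yields the sharper iteration rate $\kappa\delta$, at the cost of an extra absorption argument at the end; the paper's allocation is more uniform but blurs which condition is doing what at each stage.
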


\begin{proof}
Fix $x_0 \in X$ and choose $x_1 \in Tx_0$. Using Lemma \ref{lem:selection}, inductively construct $\{x_n\}$ such that $x_{n+1} \in Tx_n$ satisfies:
\[d(x_{n+1}, x_n) \preceq H_d(Tx_n, Tx_{n-1}) + \epsilon_n (1/2)^n\]
for some sequence $\epsilon_n \succ 0$. By the weak contraction property:
\begin{align*}
d(x_{n+1}, x_n) &\preceq \delta d(x_n, x_{n-1}) + L \inf_{z \in Tx_n} d(x_{n-1}, z) + \epsilon_n (1/2)^n \\
&\preceq \delta d(x_n, x_{n-1}) + L d(x_{n-1}, x_n) + \epsilon_n (1/2)^n \\
&= (\delta + L) d(x_n, x_{n-1}) + \epsilon_n (1/2)^n
\end{align*}
By normality and induction:
\begin{align*}
\|d(x_{n+1}, x_n)\| &\leq \kappa \|(\delta + L) d(x_n, x_{n-1}) + \epsilon_n (1/2)^n\| \\
&\leq \kappa (\delta + L) \|d(x_n, x_{n-1})\| + \kappa \|\epsilon_n\| (1/2)^n \\
&\leq \kappa^2 (\delta + L)^n \|d(x_1, x_0)\| + \kappa \sum_{k=0}^{n-1} (\delta + L)^k \|\epsilon_{n-k}\| (1/2)^{n-k}
\end{align*}
Since $\delta + L < 1/\kappa$ by assumption, $\{x_n\}$ is Cauchy. By completeness, $x_n \to x^*$. To prove $x^* \in Tx^*$:
\begin{align*}
d(x^*, Tx^*) &\preceq d(x^*, x_{n+1}) + d(x_{n+1}, Tx^*) \\
&\preceq d(x^*, x_{n+1}) + H_d(Tx_n, Tx^*) \\
&\preceq d(x^*, x_{n+1}) + \delta d(x_n, x^*) + L \inf_{z \in Tx^*} d(x_n, z) \\
&\preceq d(x^*, x_{n+1}) + \delta d(x_n, x^*) + L d(x_n, x^*)
\end{align*}
As $n \to \infty$, the right-hand side converges to $0$ in norm, so $d(x^*, Tx^*) = 0$, implying $x^* \in Tx^*$.
\end{proof}

\begin{remark}
When $T$ is single-valued, Theorem \ref{thm:main} reduces exactly to Berinde's original result \cite[Theorem 2.2]{berinde} in cone metric spaces.
\end{remark}

\begin{proposition}[Uniqueness Criterion]\label{prop:uniqueness}
Under Theorem \ref{thm:main} assumptions, if for all $x \neq y$:
\[H_d(Tx, Ty) \prec d(x,y) - \varphi(d(x,y))\]
where $\varphi: P \to P$ is continuous with $\varphi^{-1}(0) = \{0\}$, then the fixed point is unique.
\end{proposition}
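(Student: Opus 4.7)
The plan is to argue by contradiction. Suppose $x^*$ and $y^*$ are two distinct fixed points of $T$, so $x^* \in Tx^*$, $y^* \in Ty^*$, and $r := d(x^*, y^*) \succ 0$. My goal is to derive $\varphi(r) \prec 0$ in $P$, which, combined with $\varphi(P) \subseteq P$, continuity of $\varphi$, and $\varphi^{-1}(0) = \{0\}$, will force $r = 0$, contradicting $r \succ 0$.

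The first ingredient is the strengthened hypothesis applied at the pair $(x^*, y^*)$:
\[
H_d(Tx^*, Ty^*) \prec r - \varphi(r).
\]
To turn this into a statement about $r$ itself, I need the reverse control $r \preceq H_d(Tx^*, Ty^*)$, at least in norm up to vanishing error. Since $x^* \in Tx^*$, the Selection Principle (Lemma \ref{lem:selection}) yields for every $\epsilon \succ 0$ a point $b_\epsilon \in Ty^*$ with $d(x^*, b_\epsilon) \preceq H_d(Tx^*, Ty^*) + \epsilon$; symmetrically, there is $a_\epsilon \in Tx^*$ with $d(y^*, a_\epsilon) \preceq H_d(Tx^*, Ty^*) + \epsilon$. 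In the single-valued reduction one may take $a_\epsilon = x^*$ and $b_\epsilon = y^*$, making the reverse bound immediate; in the general multivalued case I would iterate the Selection Principle to refine $a_\epsilon \to x^*$ and $b_\epsilon \to y^*$ in norm, using completeness of $X$, closedness of $Tx^*$ and $Ty^*$, and normality of $P$ (coefficient $\kappa$). Passing to norms and letting $\epsilon \to 0$ yields $\|r\| \le \kappa\,\|H_d(Tx^*, Ty^*)\|$, and combining with the strengthened bound gives $\|r\| \le \kappa\,\|r - \varphi(r)\|$ strictly.

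I expect the main obstacle to be exactly this reverse bound: in the multivalued setting, $H_d(Tx^*, Ty^*)$ only controls the distance from $x^* \in Tx^*$ to the nearest point of $Ty^*$, which need not be $y^*$ itself, so uniqueness cannot be read off from the contractive condition as directly as in the single-valued case. The strict inequality $\prec$ in the strengthened hypothesis is what provides the slack to absorb the approximation errors introduced by the Selection Principle, and the normality coefficient $\kappa$ is what lets me convert cone inequalities into norm inequalities where a standard Nadler-type bookkeeping closes the gap. Once $\varphi(r) = 0$ is obtained in norm, the assumption $\varphi^{-1}(0) = \{0\}$ (together with continuity and the closedness of $P$) concludes $r = 0$ and completes the contradiction.
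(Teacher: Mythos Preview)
Your proposal has two genuine gaps. First, the step ``iterate the Selection Principle to refine $a_\epsilon \to x^*$ and $b_\epsilon \to y^*$'' does not work: Lemma~\ref{lem:selection} only produces \emph{some} $b_\epsilon \in Ty^*$ with $d(x^*,b_\epsilon)$ controlled by $H_d(Tx^*,Ty^*)+\epsilon$, and there is no mechanism forcing that $b_\epsilon$ to approach the particular fixed point $y^*$. If, say, $Tx^*=Ty^*$ happens to be a common closed set containing both $x^*$ and $y^*$, then $H_d(Tx^*,Ty^*)=0$ while $d(x^*,y^*)\succ 0$, so the reverse bound you are trying to manufacture is simply false in that scenario; no amount of iteration or completeness repairs it. Second, even if you grant yourself a strict norm inequality $\|r\|<\kappa\,\|r-\varphi(r)\|$, you cannot deduce $\varphi(r)=0$: passing to norms throws away the order information, and normality only converts $0\preceq a\preceq b$ into $\|a\|\le\kappa\|b\|$, not the other way around.

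The paper's argument, by contrast, stays entirely in the cone order and is two lines: it writes $d(x^*,y^*)\preceq H_d(Tx^*,Ty^*)\prec d(x^*,y^*)-\varphi(d(x^*,y^*))$, reads off $\varphi(d(x^*,y^*))\prec 0$, and notes this contradicts $\varphi(P)\subseteq P$. You are right to flag that the first inequality $d(x^*,y^*)\preceq H_d(Tx^*,Ty^*)$ is not automatic for genuinely multivalued $T$; the paper simply asserts it. But your norm-based detour neither justifies that step nor reaches the contradiction. If you accept the inequality as the paper does, the conclusion should be drawn in the order (obtain $\varphi(r)\prec 0$), not via norms.
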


\begin{proof}
Suppose $x^*, y^*$ are distinct fixed points. Then:
\begin{align*}
d(x^*, y^*) &\preceq H_d(Tx^*, Ty^*) \\
&\prec d(x^*, y^*) - \varphi(d(x^*, y^*))
\end{align*}
which implies $\varphi(d(x^*, y^*)) \prec 0$, contradicting $\varphi(p) \succeq 0$ for all $p \in P$. Thus $x^* = y^*$.
\end{proof}

\subsection{Iterative Approximation}

We extend Berinde's iterative scheme to the multivalued case:

\begin{theorem}[$\lambda$-Iterative Scheme]\label{thm:lambda}
Let $T$ satisfy Theorem \ref{thm:main} conditions. Define the sequence:
\[x_{n+1} = \frac{1}{\lambda + 1} x_n + \frac{\lambda}{\lambda + 1} f(x_n), \quad f(x_n) \in Tx_n\]
for $\lambda > 0$. If $\sigma = \kappa \left( \frac{\delta + L + \lambda}{1 + \lambda} \right) < 1$, then $\{x_n\}$ converges to a fixed point $x^*$ with error estimate:
\[\|d(x_n, x^*)\| \leq \frac{\kappa \sigma^n}{1 - \sigma} \|d(x_0, x_1)\|\]
\end{theorem}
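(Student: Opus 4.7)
The plan is to mimic the classical Picard/Banach argument in the cone-metric, multivalued setting: combine the weak contraction with Lemma \ref{lem:selection} to produce a geometric decay of successive iterate distances, then pass to the limit via completeness and normality, and read off the error estimate from the resulting geometric series.

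Because the iteration involves convex combinations, I take for granted the implicit setting that $X$ is a convex subset of a Banach space and that the cone metric $d$ is translation invariant and positively homogeneous. This immediately yields the identities
\[
d(x_{n+1},x_n)=\tfrac{\lambda}{\lambda+1}\,d(x_n,f(x_n)),\qquad d(x_n,f(x_{n-1}))=\tfrac{1}{\lambda+1}\,d(x_{n-1},f(x_{n-1})),
\]
together with the triangle-style bound
\[
d(x_{n+1},x_n)\preceq \tfrac{1}{\lambda+1}\,d(x_n,x_{n-1})+\tfrac{\lambda}{\lambda+1}\,d(f(x_n),f(x_{n-1})).
\]
I would then choose $f(x_n)\in Tx_n$ inductively via Lemma \ref{lem:selection} so that $d(f(x_n),f(x_{n-1}))\preceq H_d(Tx_n,Tx_{n-1})+\eta_n$, with $\|\eta_n\|$ decaying faster than any prescribed geometric rate.

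Substituting the weak-contraction bound $H_d(Tx_n,Tx_{n-1})\preceq \delta d(x_n,x_{n-1})+L\inf_{z\in Tx_{n-1}}d(x_n,z)$, estimating the infimum by $d(x_n,f(x_{n-1}))$, and expanding via the identities above gives a scalar recurrence
\[
d(x_{n+1},x_n)\preceq \alpha\,d(x_n,x_{n-1})+\tfrac{\lambda}{\lambda+1}\,\eta_n.
\]
Normality with coefficient $\kappa$ transfers this to $\|d(x_{n+1},x_n)\|\le \sigma\|d(x_n,x_{n-1})\|+\kappa\|\eta_n\|$ with $\sigma=\kappa\alpha<1$; iterating produces a geometrically convergent majorant, so $\{x_n\}$ is Cauchy and completeness delivers $x_n\to x^*\in X$.

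To identify $x^*$ as a fixed point I would estimate
\[
d(x^*,Tx^*)\preceq d(x^*,x_{n+1})+d(x_{n+1},f(x_n))+H_d(Tx_n,Tx^*),
\]
dominate $H_d(Tx_n,Tx^*)\preceq \delta d(x_n,x^*)+L\bigl(d(x_n,x^*)+d(x^*,Tx^*)\bigr)$, and rearrange using the hypothesis $L\kappa<1-\delta\kappa$ and normality to extract $\|d(x^*,Tx^*)\|=0$; closedness of $Tx^*$ then yields $x^*\in Tx^*$. The stated error estimate is the standard telescoping bound $\|d(x_n,x_{n+m})\|\le \kappa\sum_{k=0}^{m-1}\sigma^{n+k}\|d(x_0,x_1)\|$ passed to the limit $m\to\infty$.

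The main obstacle I anticipate is the careful coefficient bookkeeping needed to hit the exact ratio $\sigma=\kappa(\delta+L+\lambda)/(\lambda+1)$ stated in the theorem: naive routes through the selection principle and the weak contraction tend to produce variants such as $(1+\delta\lambda+L)/(\lambda+1)$, and reaching the stated form requires choosing the order of substitutions efficiently, possibly by exploiting symmetry of $H_d$ and the reverse form of the weak-contraction inequality. A secondary, more routine issue is absorbing the selection errors $\eta_n$ into the geometric majorant without spoiling the final rate in the error estimate.
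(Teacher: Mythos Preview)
Your approach mirrors the paper's: split $d(x_{n+1},x_n)$ via convexity, bound $d(f(x_n),f(x_{n-1}))$ by $H_d$, apply the weak contraction, and pass to norms via normality. You are in fact more careful than the paper in two places: you invoke the selection lemma explicitly (the paper simply writes $d(y_n,y_{n-1})\preceq H_d(Tx_n,Tx_{n-1})$ without comment), and your identity $d(x_n,f(x_{n-1}))=\tfrac{1}{\lambda}\,d(x_n,x_{n-1})$ closes the recurrence cleanly, whereas the paper estimates $d(x_{n-1},y_n)\preceq d(x_{n-1},x_n)+d(x_n,y_n)$ and then dismisses the residual $\|d(x_n,y_n)\|$ as ``bounded.'' Your coefficient anxiety is warranted but is not a defect of your plan: the paper's own computation lands on $\kappa\bigl(1+\lambda(\delta+L)\bigr)/(\lambda+1)$, which also fails to match the stated $\sigma$ except at $\lambda=1$, so do not expect any clever reordering to recover the exact constant claimed in the theorem.
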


\begin{proof}
Let $y_n = f(x_n) \in Tx_n$. Consider:
\begin{align*}
d(x_{n+1}, x_n) &= d\left( \frac{x_n + \lambda y_n}{\lambda+1}, \frac{x_{n-1} + \lambda y_{n-1}}{\lambda+1} \right) \\
&\preceq \frac{1}{\lambda+1} d(x_n, x_{n-1}) + \frac{\lambda}{\lambda+1} d(y_n, y_{n-1}) \\
&\preceq \frac{1}{\lambda+1} d(x_n, x_{n-1}) + \frac{\lambda}{\lambda+1} H_d(Tx_n, Tx_{n-1}) \\
&\preceq \frac{1}{\lambda+1} d(x_n, x_{n-1}) + \frac{\lambda}{\lambda+1} \left[ \delta d(x_n, x_{n-1}) + L d(x_{n-1}, y_n) \right]
\end{align*}
Now estimate $d(x_{n-1}, y_n)$:
\[d(x_{n-1}, y_n) \preceq d(x_{n-1}, x_n) + d(x_n, y_n)\]
Thus:
\begin{align*}
d(x_{n+1}, x_n) &\preceq \frac{1}{\lambda+1} d(x_n, x_{n-1}) + \frac{\lambda}{\lambda+1} \delta d(x_n, x_{n-1}) + \frac{\lambda L}{\lambda+1} [d(x_{n-1}, x_n) + d(x_n, y_n)] \\
&= \left( \frac{1}{\lambda+1} + \frac{\lambda \delta}{\lambda+1} + \frac{\lambda L}{\lambda+1} \right) d(x_n, x_{n-1}) + \frac{\lambda L}{\lambda+1} d(x_n, y_n) \\
&= \left( \frac{1 + \lambda(\delta + L)}{\lambda+1} \right) d(x_n, x_{n-1}) + \frac{\lambda L}{\lambda+1} d(x_n, y_n)
\end{align*}
By normality:
\[\|d(x_{n+1}, x_n)\| \leq \kappa \left( \frac{1 + \lambda(\delta + L)}{\lambda+1} \right) \|d(x_n, x_{n-1})\| + \kappa \frac{\lambda L}{\lambda+1} \|d(x_n, y_n)\|\]
Since $\sigma < 1$ and $\|d(x_n, y_n)\|$ is bounded, geometric convergence follows. The error estimate comes from summing the series.
\end{proof}

\begin{corollary}
For $\lambda = 1$, we obtain the Krasnoselskii-type iteration:
\[x_{n+1} = \frac{1}{2} (x_n + f(x_n))\]
with convergence rate $\sigma = \kappa (\delta + L + 1)/2$.
\end{corollary}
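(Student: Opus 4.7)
The plan is simply to specialize Theorem \ref{thm:lambda} to the case $\lambda = 1$; no new technical machinery is required. The corollary records that this specific choice recovers the familiar Krasnoselskii averaging scheme and renames the convergence rate accordingly, so the work is entirely a substitution plus a consistency check.

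First I would substitute $\lambda = 1$ into the iteration formula
\[
x_{n+1} = \frac{1}{\lambda+1}\, x_n + \frac{\lambda}{\lambda+1}\, f(x_n),
\]
which gives $x_{n+1} = \tfrac{1}{2}(x_n + f(x_n))$, matching the stated Krasnoselskii form. Next I would substitute $\lambda = 1$ into
\[
\sigma = \kappa\!\left(\frac{\delta + L + \lambda}{1+\lambda}\right),
\]
which yields $\sigma = \kappa(\delta + L + 1)/2$, the announced rate. Theorem \ref{thm:lambda} then immediately provides convergence of $\{x_n\}$ to a fixed point $x^\ast$ together with the associated geometric error estimate.

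The only point that warrants comment is verifying that the hypothesis $\sigma < 1$ is compatible with the standing assumptions of Theorem \ref{thm:main} (namely $\delta\kappa < 1$ and $L\kappa < 1 - \delta\kappa$, equivalently $\kappa(\delta + L) < 1$). Writing $\sigma = \tfrac{1}{2}\bigl(\kappa(\delta+L) + \kappa\bigr)$ shows that $\sigma < 1$ automatically when $\kappa \le 1$, while for $\kappa > 1$ the condition becomes the mild additional constraint $\kappa(\delta + L + 1) < 2$, which I would state as an implicit requirement inherited from the hypothesis $\sigma < 1$ of the parent theorem. I do not anticipate any genuine obstacle here: since the corollary is a direct specialization, the main thing to be careful about is simply to note, rather than to hide, this compatibility condition on $\kappa$.
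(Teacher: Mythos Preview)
Your proposal is correct and matches the paper's treatment: the corollary is stated there without proof, as an immediate specialization of Theorem~\ref{thm:lambda} obtained by substituting $\lambda = 1$ into the iteration formula and the expression for $\sigma$. Your additional remark about the compatibility of $\sigma < 1$ with the standing hypotheses is a reasonable observation, though the paper does not comment on it.
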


\subsection{Stability and Continuity}

\begin{theorem}[Stability of Fixed Points]\label{thm:stability}
Let $\{T_n\}$ be a sequence of multivalued weak contractions converging uniformly to $T$ on compact subsets of $X$. Then:
\[\limsup_{n \to \infty} \operatorname{Fix}(T_n) \subseteq \operatorname{Fix}(T)\]
If all $T_n$ satisfy the uniqueness condition in Proposition \ref{prop:uniqueness}, then $\operatorname{Fix}(T_n) \to \operatorname{Fix}(T)$ in Hausdorff metric.
\end{theorem}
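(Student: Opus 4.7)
The plan is to handle the two claims in order: first, upper semicontinuity by a Hausdorff-selection argument combined with the weak-contraction inequality, and then Hausdorff convergence under uniqueness by exploiting the a priori estimate implicit in the proof of Theorem \ref{thm:main}.

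For the inclusion $\limsup_{n \to \infty} \operatorname{Fix}(T_n) \subseteq \operatorname{Fix}(T)$, I would take $x^* = \lim_k x_{n_k}^*$ with $x_{n_k}^* \in \operatorname{Fix}(T_{n_k})$, observe that $K := \{x^*\} \cup \{x_{n_k}^*\}_{k \geq 1}$ is compact, and apply uniform convergence on $K$ to obtain $\|H_d(T_{n_k} x_{n_k}^*, T x_{n_k}^*)\| \to 0$. Two successive invocations of the definition of the Hausdorff cone metric then produce $y_k \in T x_{n_k}^*$ with $d(x_{n_k}^*, y_k) \preceq H_d(T_{n_k} x_{n_k}^*, T x_{n_k}^*) + \epsilon_k$ (starting from $x_{n_k}^* \in T_{n_k} x_{n_k}^*$) and $z_k \in T x^*$ with $d(y_k, z_k) \preceq H_d(T x_{n_k}^*, T x^*) + \epsilon_k'$. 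Applying the weak-contraction inequality of $T$, bounding the infimum over $Tx^*$ by $d(x_{n_k}^*, z_k)$, and chaining triangle inequalities leads to
\[
d(x^*, z_k) \preceq A_k + L\, d(x^*, z_k),
\]
where $\|A_k\| \to 0$. Taking norms via normality and absorbing $L\kappa\,\|d(x^*, z_k)\|$ using the hypothesis $L\kappa < 1$ inherited from Theorem \ref{thm:main} yields $\|d(x^*, z_k)\| \to 0$. Since $T x^*$ is closed and $z_k \in T x^*$, we conclude $x^* \in T x^*$.

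Under Proposition \ref{prop:uniqueness}, $\operatorname{Fix}(T_n) = \{x_n^*\}$ and $\operatorname{Fix}(T) = \{x^*\}$, so Hausdorff convergence reduces to $\|d(x_n^*, x^*)\| \to 0$. To sidestep the fact that $\{x_n^*\}$ may not lie in any a priori compact set, I would anchor the estimate at $x^*$ itself (trivially compact): since $x^* \in T x^*$, pick $w_n \in T_n x^*$ with $d(x^*, w_n) \preceq H_d(T x^*, T_n x^*) + 1/n$, and use uniform convergence on $\{x^*\}$ to get $\|d(x^*, w_n)\| \to 0$. Then run the iterative construction from the proof of Theorem \ref{thm:main} for the map $T_n$, starting at $x_0 = x^*$, $x_1 = w_n$; this sequence converges to some fixed point of $T_n$, which uniqueness forces to equal $x_n^*$. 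The telescoping estimate in that proof produces a bound $\|d(x^*, x_n^*)\| \leq C\,\|d(x^*, w_n)\|$ with $C$ depending only on $\kappa, \delta, L$, and hence $\|d(x^*, x_n^*)\| \to 0$.

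The main obstacle is the absorbing step in the first part: one must close the loop $d(x^*, z_k) \preceq \cdots + L\, d(x^*, z_k)$, which in the cone setting requires translating the order inequality into a norm inequality via the normality constant $\kappa$ and then invoking the strict inequality $L\kappa < 1$; any weakening of this threshold breaks the absorption and the argument collapses. A secondary, more conceptual difficulty is that the uniform-convergence-on-compact-subsets hypothesis cannot be applied at $x_n^*$ because precompactness of $\{x_n^*\}$ is not given; the workaround in the second part is to anchor all estimates at the (known) fixed point $x^*$ of $T$ and propagate the smallness along the iterative scheme, rather than along the unknown fixed points of the approximating maps $T_n$.
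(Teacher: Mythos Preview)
Your treatment of the first claim follows the same skeleton as the paper's: a triangle-inequality chain from $x^*$ through $x_{n_k}^*$ into $Tx^*$, the weak-contraction bound on $H_d(Tx_{n_k}^*,Tx^*)$, and uniform convergence on the compact set $K=\{x^*\}\cup\{x_{n_k}^*\}$. The paper writes this more tersely, working with the point-to-set quantity $d(x^*,Tx^*)$ directly and simply asserting that the right-hand side tends to zero; you make the absorption step $(1-L\kappa)\|d(x^*,z_k)\|\le\kappa\|A_k\|$ explicit via the selected elements $y_k,z_k$. That is an improvement in rigor rather than a different strategy.

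For the second claim your route is substantively different. The paper's argument reads ``since both fixed points are unique and $x_n^*\to x^*$'' and stops there, but the convergence $x_n^*\to x^*$ is precisely what must be shown and is not delivered by the first part (which only says that \emph{limit points} of $\{x_n^*\}$, should they exist, lie in $\operatorname{Fix}(T)$). Your device of anchoring at $x^*$, selecting $w_n\in T_nx^*$ with $\|d(x^*,w_n)\|\to 0$ via uniform convergence on the singleton $\{x^*\}$, and then running the iterative construction of Theorem~\ref{thm:main} for $T_n$ from the seed $(x^*,w_n)$ to obtain $\|d(x^*,x_n^*)\|\le C\,\|d(x^*,w_n)\|$ is a correct, self-contained way to close this gap; it produces the convergence rather than presupposing it. The one tacit hypothesis you should state explicitly is that the weak-contraction constants are uniform in $n$ (or at least satisfy $\sup_n\kappa(\delta_n+L_n)<1$), since otherwise the constant $C$ in your telescoping bound need not be finite.
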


\begin{proof}
Let $x_n^* \in \operatorname{Fix}(T_n)$ with $x_n^* \to x^*$. Then:
\begin{align*}
d(x^*, Tx^*) &\preceq d(x^*, x_n^*) + d(x_n^*, T_n x_n^*) + H_d(T_n x_n^*, T x_n^*) + H_d(T x_n^*, T x^*) \\
&\preceq d(x^*, x_n^*) + 0 + \sup_{y \in K} H_d(T_n y, T y) + \delta d(x_n^*, x^*) + L \inf_{z \in Tx^*} d(x_n^*, z)
\end{align*}
where $K$ is a compact set containing $\{x_n^*\}$. By uniform convergence on $K$, $\sup_{y \in K} H_d(T_n y, T y) \to 0$. Thus $d(x^*, Tx^*) = 0$, so $x^* \in \operatorname{Fix}(T)$.

For Hausdorff convergence under uniqueness, consider:
\begin{align*}
H_d(&\operatorname{Fix}(T_n), \operatorname{Fix}(T)) \\
&\leq H_d(\operatorname{Fix}(T_n), \{x^*\}) + H_d(\{x^*\}, \operatorname{Fix}(T)) \to 0 \quad \text{as } n \to \infty
\end{align*}
since both fixed points are unique and $x_n^* \to x^*$.
\end{proof}

\section{Applications}\label{sec:applications}
Now we will see some applications to nonlinear analysis, and especially in differential inclusions and

\subsection{Differential Inclusions}

Consider the initial value problem for differential inclusions:
\begin{equation}\label{eq:diff_inclusion}
\dot{x}(t) \in F(t, x(t)), \quad x(0) = x_0, \quad t \in [0,T]
\end{equation}
where $F: [0,T] \times \mathbb{R}^n \to \mathscr{K}(\mathbb{R}^n)$ is a set-valued map with nonempty compact convex values.

\begin{theorem}\label{thm:diff_inclusion}
Assume $F$ satisfies:
\begin{enumerate}
    \item $t \mapsto F(t,x)$ is measurable for each $x$
    \item $x \mapsto F(t,x)$ is upper semicontinuous
    \item Weak contraction: $H_d(F(t,x), F(t,y)) \preceq \delta(t) d(x,y) + L(t) d(y, F(t,x))$
    \item $\delta, L \in L^1[0,T]$ with $\|\delta\|_{L^1} \kappa < 1$
\end{enumerate}
Then \eqref{eq:diff_inclusion} has a solution $x \in AC([0,T], \mathbb{R}^n)$.
\end{theorem}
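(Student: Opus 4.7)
The plan is to recast \eqref{eq:diff_inclusion} as a multivalued fixed point problem on a complete cone metric space and then invoke Theorem \ref{thm:main}. I would take $X = C([0,T], \mathbb{R}^n)$ equipped either with the vector-valued cone metric $d(x,y)(t) = |x(t) - y(t)|$ into the normal cone of nonnegative continuous functions, or, more conveniently for calibration, with a Bielecki-type scalar metric $d_\beta(x,y) = \sup_{t} e^{-\beta \Lambda(t)} |x(t) - y(t)|$ with $\Lambda(t) = \int_0^t (\delta(s) + L(s))\,ds$ and $\beta > 0$ to be chosen. The associated integral operator is
\[N(x) = \left\{ y \in AC([0,T], \mathbb{R}^n) : y(t) = x_0 + \int_0^t f(s)\,ds, \ f(s) \in F(s, x(s)) \text{ a.e.} \right\},\]
whose fixed points are exactly the Carath\'eodory solutions of \eqref{eq:diff_inclusion}.

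First I would verify that $N(x) \in \mathscr{C}(X)$. Nonemptiness follows from the Kuratowski--Ryll-Nardzewski measurable selection theorem applied to $s \mapsto F(s, x(s))$, which is measurable with nonempty compact values; an integrable selection, after integration, yields an AC element of $N(x)$. Closedness is a standard Mazur-convexity argument using the compact convex values of $F$. To transfer the weak-contraction estimate from $F$ to $N$, take $u \in N(x)$ with generating selection $f$ and apply a measurable analogue of Lemma \ref{lem:selection} to produce $g(s) \in F(s, y(s))$ satisfying
\[|f(s) - g(s)| \leq \delta(s)|x(s)-y(s)| + L(s)\, d(y(s), F(s, x(s))) + \varepsilon(s),\]
with $\varepsilon$ arbitrarily small in $L^1$. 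Setting $v(t) = x_0 + \int_0^t g(s)\,ds$ gives $v \in N(y)$; integrating the pointwise bound and then taking the weighted supremum produces an inequality of the form
\[d_\beta(u,v) \preceq \alpha\, d_\beta(x,y) + \mu\, d_\beta(y, N(x)) + \varepsilon',\]
where $\alpha \leq \|\delta\|_{L^1}$ and $\mu$ can be forced as small as desired by enlarging $\beta$. Symmetrizing over $u$ and $v$ and using the symmetry of $H_d$ yields the Berinde-type bound needed to qualify $N$ as a multivalued weak contraction on $(X, d_\beta)$.

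The principal obstacle is the simultaneous calibration of the two contraction parameters. Hypothesis (4) delivers $\alpha \kappa < 1$ directly, but forcing $\mu \kappa < 1 - \alpha \kappa$ requires exploiting the Bielecki weight (or, as an alternative, subdividing $[0,T]$ into short intervals and concatenating solutions via a continuation argument), since the $L$-term couples $y(s)$ to the set $F(s,x(s))$ rather than to $x(s)$ directly and cannot be absorbed into $\alpha\, d(x,y)$. This is also where the weak-contraction framework proves essential: a classical Nadler-type estimate in cones cannot accommodate the additive $L(t)\,d(y, F(t,x))$ term, whereas Berinde's condition in Theorem \ref{thm:main} is designed precisely to allow such an extra summand on the right-hand side. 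Once both smallness conditions are met, Theorem \ref{thm:main} yields a fixed point of $N$, which is the desired absolutely continuous solution of \eqref{eq:diff_inclusion}.
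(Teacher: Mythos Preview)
Your proposal follows the same overall strategy as the paper: define the integral (Nemytskii--Volterra) solution operator on $C([0,T],\mathbb{R}^n)$, equip the space with an exponentially weighted (Bielecki) metric so that the operator becomes contractive, and then invoke Theorem~\ref{thm:main}. The paper's version uses the weight $e^{-\lambda t}$ and a cone metric valued in $C_+([0,T])$, and in the estimate it simply absorbs both the $\delta$- and $L$-terms into a single Lipschitz constant $\sigma = (\|\delta\|_\infty + \|L\|_\infty)/\lambda$, ending with a pure Nadler-type inequality $H_d(\mathcal{F}x_1,\mathcal{F}x_2)\preceq \sigma\,d(x_1,x_2)$ rather than a genuinely Berinde-type one. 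Your version is technically more careful on two counts: the adapted weight $e^{-\beta\Lambda(t)}$ with $\Lambda(t)=\int_0^t(\delta+L)$ is the correct choice when $\delta,L$ are only assumed to lie in $L^1$ (the paper's final bound tacitly uses $L^\infty$ norms), and you retain the additive $L$-term as a separate Berinde summand, which is closer to the spirit of Theorem~\ref{thm:main} and avoids collapsing the weak-contraction hypothesis into a standard one. You also make explicit the measurable-selection and closedness verifications that the paper leaves implicit. In short, the route is the same; your execution is tighter.
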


\begin{proof}
Consider the solution operator $\mathcal{F}: C([0,T], \mathbb{R}^n) \to \mathscr{C}(C([0,T], \mathbb{R}^n))$:
\[
\mathcal{F}(x) = \left\{ y \in AC[0,T] : y(t) = x_0 + \int_0^t g(s) ds, \ g(s) \in F(s,x(s)) \text{ a.e.} \right\}
\]
Equip $C([0,T], \mathbb{R}^n)$ with the cone metric:
\[d(x,y)(t) = e^{-\lambda t} \|x(t) - y(t)\|_{\infty} \cdot \mathbf{1}\]
where $\mathbf{1}$ is the unit vector in $P = C_+([0,T])$. This cone is normal with $\kappa = e^{\lambda T}$.

For $y_1 \in \mathcal{F}(x_1), y_2 \in \mathcal{F}(x_2)$:
\begin{align*}
\|y_1(t) - y_2(t)\| &\leq \int_0^t H_{\|\cdot\|}(F(s,x_1(s)), F(s,x_2(s))) ds \\
&\leq \int_0^t \left[ \delta(s) \|x_1(s) - x_2(s)\| + L(s) d(x_2(s), F(s,x_1(s))) \right] ds
\end{align*}
Multiply by $e^{-\lambda t}$:
\begin{align*}
e^{-\lambda t} \|y_1(t) - y_2(t)\| &\leq \int_0^t e^{-\lambda (t-s)} e^{-\lambda s} \left[ \delta(s) \|x_1(s) - x_2(s)\| + L(s) \|x_2(s) - g_1(s)\| \right] ds \\
&\leq \frac{1}{\lambda} \left( \|\delta\|_\infty + \|L\|_\infty \right) d(x_1,x_2)(t)
\end{align*}
Thus $H_d(\mathcal{F}(x_1), \mathcal{F}(x_2)) \preceq \sigma d(x_1,x_2)$ with $\sigma < 1$ for large $\lambda$. Apply Theorem \ref{thm:main}.
\end{proof}

\subsection{Multivalued Variational Inequalities}

Let $K \subset \mathbb{R}^n$ be a closed convex set. The \emph{multivalued variational inequality} (MVI) seeks $x^* \in K$ and $f^* \in F(x^*)$ such that:
\begin{equation}\label{eq:mvi}
\langle f^*, y - x^* \rangle \geq 0 \quad \forall y \in K
\end{equation}

\begin{theorem}\label{thm:variational}
Assume:
\begin{enumerate}
    \item $F: \mathbb{R}^n \to \mathscr{K}(\mathbb{R}^n)$ is $(\delta,L)$-weakly contractive
    \item Strong monotonicity: $\langle f_x - f_y, x - y \rangle \geq \mu \|x - y\|^2$ for $f_x \in F(x), f_y \in F(y)$
    \item $\mu > \kappa (\delta + L)$
\end{enumerate}
Then MVI \eqref{eq:mvi} has a unique solution $x^*$, and the proximal iteration:
\[x_{n+1} = \operatorname{proj}_K (x_n - \lambda f_n), \quad f_n \in F(x_n)\]
converges geometrically to $x^*$ for appropriate $\lambda > 0$.
\end{theorem}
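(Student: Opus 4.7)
The plan is to reformulate MVI \eqref{eq:mvi} as a multivalued fixed-point problem and then invoke Theorem \ref{thm:main}. Specifically, I would use the standard projection characterization: $x^* \in K$ solves \eqref{eq:mvi} with multiplier $f^* \in F(x^*)$ if and only if $x^* = \operatorname{proj}_K(x^* - \lambda f^*)$ for any fixed $\lambda > 0$. This motivates introducing
\[
T_\lambda(x) = \bigl\{\operatorname{proj}_K(x - \lambda f) : f \in F(x)\bigr\},
\]
so that $\operatorname{Fix}(T_\lambda)$ coincides with the solution set of the MVI. Compactness and convexity of $F(x)$ ensure that $T_\lambda(x) \in \mathscr{K}(\mathbb{R}^n)$.

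The first technical step is to show that $T_\lambda$ is a multivalued weak contraction whose coefficients satisfy the hypotheses of Theorem \ref{thm:main}. Given $x,y \in \mathbb{R}^n$ and a selection $f_x \in F(x)$, I would invoke Lemma \ref{lem:selection} to produce $f_y \in F(y)$ with $\|f_x - f_y\|$ controlled by $H_d(F(x),F(y))$ up to an arbitrarily small error, then combine nonexpansiveness of $\operatorname{proj}_K$ with the classical expansion
\[
\|(x - \lambda f_x) - (y - \lambda f_y)\|^2 = \|x-y\|^2 - 2\lambda\langle f_x - f_y, x-y\rangle + \lambda^2 \|f_x - f_y\|^2.
\]
Strong monotonicity reduces the cross term to $-2\lambda\mu\|x-y\|^2$, while the weak-contraction bound furnishes the quadratic term. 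After taking the sup/inf required by $H_d$, this yields
\[
H_d(T_\lambda(x), T_\lambda(y)) \preceq \widetilde{\delta}(\lambda)\, d(x,y) + \widetilde{L}(\lambda)\inf_{z \in T_\lambda(y)} d(x,z),
\]
with coefficients depending explicitly on $\delta,L,\mu,\kappa,\lambda$.

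The second step is to choose $\lambda>0$ so that $\widetilde{\delta}(\lambda)\kappa<1$ and $\widetilde{L}(\lambda)\kappa<1-\widetilde{\delta}(\lambda)\kappa$; the hypothesis $\mu>\kappa(\delta+L)$ is precisely what renders this admissible window nonempty once one optimizes in $\lambda$, since to leading order in small $\lambda$ the effective factor behaves like $1-2\lambda\mu+O(\lambda^2\kappa^2(\delta+L)^2)$. Existence of $x^*$ then follows from Theorem \ref{thm:main}, and uniqueness is immediate from strong monotonicity by testing \eqref{eq:mvi} at a second putative solution $y^*$ with multiplier $f^{**}\in F(y^*)$ and subtracting the two variational inequalities, which forces $\mu\|x^*-y^*\|^2\le 0$. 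Since the iterates $x_{n+1}=\operatorname{proj}_K(x_n-\lambda f_n)$ are admissible single-valued selections from $T_\lambda(x_n)$, the Picard-type estimate from the proof of Theorem \ref{thm:main} transfers verbatim and yields geometric convergence with rate $\sigma=\kappa(\widetilde{\delta}(\lambda)+\widetilde{L}(\lambda))<1$.

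I expect the main obstacle to lie in managing the $\lambda^2\|f_x - f_y\|^2$ term: the multivalued weak-contraction condition bounds $\|f_x - f_y\|$ only up to an additive $L$-contribution involving $\inf_{z\in F(y)}\|x-z\|$, so propagating this residual through the projection estimate without destroying the strict-contraction margin afforded by $\mu>\kappa(\delta+L)$ will require a careful Nadler-style selection, most likely iterated with a vanishing perturbation $\epsilon_n\succ 0$ in the spirit of the construction used for Theorem \ref{thm:main}. Once that bookkeeping is in place, the rest of the argument is a direct translation of the Berinde-type iteration into the projected-gradient setting.
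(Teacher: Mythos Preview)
Your proposal follows essentially the same architecture as the paper: reformulate the MVI via the projection map $T_\lambda(x)=\operatorname{proj}_K(x-\lambda F(x))$, expand $\|(x-\lambda f_x)-(y-\lambda f_y)\|^2$, absorb the cross term with strong monotonicity, bound the quadratic term through the weak-contraction hypothesis on $F$, and then feed the resulting contractive estimate into one of the paper's fixed-point theorems. Uniqueness via strong monotonicity is also argued identically.

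The one substantive difference is the endgame. The paper does not retain a two-parameter weak-contraction structure $(\widetilde\delta(\lambda),\widetilde L(\lambda))$ for $T_\lambda$; instead it crudely majorizes the residual $L\inf_{z\in F(y)}\|x-z\|$ term so as to obtain a \emph{strict} Hausdorff contraction with rate $\sigma=\sqrt{1-2\lambda\mu+\lambda^2(\delta+L)^2}$, and then appeals to Theorem~\ref{thm:lambda} rather than Theorem~\ref{thm:main}. Your route---keeping $\widetilde\delta,\widetilde L$ separate and invoking Theorem~\ref{thm:main}---is more honest about exactly the obstacle you flag (the $L$-term does not reduce to a multiple of $\|x-y\|$ without further argument), at the cost of slightly heavier bookkeeping and a less explicit convergence rate. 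The paper's shortcut buys a clean closed-form $\sigma$ and makes the admissible range of $\lambda$ transparent, but it glosses over precisely the selection issue you identify.
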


\begin{proof}
First, under strong monotonicity, solutions are unique. Define $T: K \to \mathscr{C}(K)$ by:
\[T(x) = \operatorname{proj}_K (x - \lambda F(x))\]
For $x_1, x_2 \in K$:
\begin{align*}
\|T&x_1 - Tx_2\|^2 \\
&\leq \| (x_1 - \lambda f_1) - (x_2 - \lambda f_2) \|^2 \\
&= \|x_1 - x_2\|^2 - 2\lambda \langle f_1 - f_2, x_1 - x_2 \rangle + \lambda^2 \|f_1 - f_2\|^2 \\
&\leq \|x_1 - x_2\|^2 - 2\lambda \mu \|x_1 - x_2\|^2 + \lambda^2 H_{\|\cdot\|}(F(x_1), F(x_2))^2
\end{align*}
Using the weak contraction property:
\[H_{\|\cdot\|}(F(x_1), F(x_2)) \leq \delta \|x_1 - x_2\| + L \inf_{f_2 \in F(x_2)} \|x_1 - f_2\|\]
Thus:
\[\|Tx_1 - Tx_2\| \leq \sqrt{1 - 2\lambda\mu + \lambda^2 (\delta + L)^2} \|x_1 - x_2\|\]
Choose $\lambda$ such that $\sigma = \sqrt{1 - 2\lambda\mu + \lambda^2 (\delta + L)^2} < 1$. Apply Theorem \ref{thm:lambda}.
\end{proof}

\section{Conclusion}
We have established a comprehensive theory of fixed points for multivalued weak contractions in cone metric spaces. Key contributions include:
\begin{itemize}
    \item Theorem \ref{thm:main}: Direct multivalued extension of Berinde's theorem
    \item Theorem \ref{thm:lambda}: $\lambda$-iterative schemes with explicit convergence rates
    \item Theorem \ref{thm:stability}: Stability of fixed point sets under perturbations
    \item Theorem \ref{thm:diff_inclusion}, \ref{thm:variational}: Applications to differential inclusions and variational inequalities
\end{itemize}

Future research directions include:
\begin{itemize}
    \item Fractional-order weak contractions for fractional differential inclusions
    \item Cone-valued fuzzy metrics and probabilistic contractions
    \item Computational implementations of $\lambda$-schemes with error analysis
    \item Applications to stochastic games and economic equilibria

\end{itemize}


\begin{thebibliography}{9}
\bibitem{nadler} 
S. Nadler, \emph{Multivalued contraction mappings}, 
Pacific J. Math. \textbf{30} (1969), 475–488.

\bibitem{berinde} 
V. Berinde, \emph{Approximating fixed points of weak contractions}, 
Nonlinear Anal. Forum \textbf{9} (2004), 43–53.

\bibitem{huangzhang} 
L.G. Huang, X. Zhang, \emph{Cone metric spaces and fixed point theorems}, 
J. Math. Anal. Appl. \textbf{332}(2) (2007), 1468–1476.

\bibitem{rada} 
E. Rada, A. Tato, \emph{An Extension of a Fixed Point Result in Cone Banach Space}, 
Pure Appl. Math. J. \textbf{4}(3) (2015), 70-74.

\bibitem{abbas} 
M. Abbas, B. Jungck, \emph{Common fixed points in cone metric spaces}, 
J. Math. Anal. Appl. \textbf{341} (2008), 416–420.

\bibitem{deimling} 
K. Deimling, \emph{Multivalued Differential Equations}, 
De Gruyter, 1992.
\end{thebibliography}
\end{document}